\newcommand{\be}{\begin{equation}}
\newcommand{\ee}{\end{equation}}
\newcommand{\ba}{\begin{eqnarray}}
\newcommand{\ea}{\end{eqnarray}}
\newcommand{\baa}{\begin{eqnarray*}}
\newcommand{\eaa}{\end{eqnarray*}}
\newcommand{\bb}{}
\newcommand{\lab}[1]{\label{#1}}
\newcommand{\re}[1]{(\ref{#1})}
\newcounter{my}
\newcommand{\he}%
   {\stepcounter{equation}\setcounter{my}%
   {\value{equation}}\setcounter{equation}0%
   }%
\newcommand{\she}%
   {\setcounter{equation}{\value{my}}%
    }%
\renewcommand\t{\tilde}
\newtheorem{theorem}{Theorem}[section]
\newtheorem{corollary}[theorem]{Corollary}
\newtheorem{lemma}[theorem]{Lemma}
\theoremstyle{definition}
\newtheorem{remark}[theorem]{Remark}
\numberwithin{equation}{section}
\DeclareRobustCommand{\erase}{\bgroup\markoverwith{\textcolor{red}{\rule[.5ex]{2pt}{0.8pt}}}\ULon}
\title{Eigenvalue equations for sieved polynomials 
or proving Askey right again}
\author{Luc Vinet and Alexei Zhedanov}
\address{IVADO and Centre de recherches math\'ematiques, Universit\'e de Montr\'eal, P.O. Box 6128, Centre-ville Station, Montr\'eal (Qu\'ebec), H3C 3J7}
\address{Department of Mathematics, School of Information, Renmin University of China, Beijing 100872,CHINA}
\begin{document}

\begin{abstract}
The sieved Jacobi polynomials have been introduced by Askey. These can be obtained from conveniently taking $q$ to be a root of unity in the Askey-Wilson polynomials. The question of determining if they are eigenfunctions of some operator has been lingering for a long time. Askey impressed on us his conviction that it had an affirmative answer. It is shown that he was right and that this operator is of Dunkl type with cyclic reflections corresponding to the powers of $q$.
\end{abstract}

\maketitle

\section{Introduction}
Dick Askey was kindly enthusiastic about colleagues' work and generous with comments and suggestions. This paper is rooted in such a manifestation of Askey's remarkable benevolence and insights regarding our past work \cite{tsujimoto2012dunkl}, \cite{vinet2012limit} on the bispectrality of Bannai-Ito polynomials \cite{bannai2021algebraic}. Indeed, every time we would present this work in his presence, Dick would express his appreciation but would not miss the occasion to say that he felt the results could be extended beyond $-1$-polynomials to a class of polynomials referred to as sieved that he had introduced with colleagues \cite{al1984sieved} and on which Mourad Ismail and his collaborators have written abundantly (see for instance \cite{charris1987sieved}, \cite{bustoz1990sieved}, \cite{ismail1992sieved}). The issue was of casting the sieved polynomials as eigenfunctions of some operator acting on the variable as we had managed to do for the Bannai-Ito ones. We would typically reply that we were doubtful this was possible only to find recently from studying one family of these sieved polynomials \cite{vinet2024cmv}, \cite{vinet2025bispectrality}, that Dick was right at least in that case. The CMV formalism \cite{cantero2003five} proved to be a guiding light in the matter. Indeed the desired result could be found by considering first the sieved Jacobi orthogonal polynomials on the unit circle (OPUC), passing to the Laurent polynomials associated to these OPUC, determining the operator in the circle variable $z$ that is diagonal on these polynomials to arrive at the eigenvalue equation of the sieved Jacobi orthogonal polynomials on the real line using their expression in terms of the Laurent polynomials through the Szeg\H{o} map and the differential equation of the latter.

This is the story that we will tell here. We shall first set the stage in Section 2 with some background on sieved polynomials and Bannai-Ito polynomials recalling in particular that the latter are eigenfunctions of a Dunkl shift operator involving reflections (by definition) to give context to the challenge that Askey repeatedly presented to us. In the rest of the paper we shall explain how we could finally confirm Askey's intuition that sieved polynomials are eigenfunctions of differential/difference operators in the Jacobi case. Because they will be used throughout, the general definition of OPUC and of the associated CMV Laurent polynomials as well as the expressions of the corresponding OPRL introduced by Szeg\H{o} are recalled in Section 3. The sieved Jacobi OPUC and OPRL are then presented in Section 4. The operator of which the CMV Laurent polynomials associated to the sieved Jacobi OPUC are eigenfunctions is found in Section 5 and this result will be seen in Section 6 to lead to the differential/difference eigenvalue equation of the sieved Jacobi OPRL. The special case of the sieved ultraspherical polynomials is discussed in Section 7 and conclusions follow.

\section{Background}
This section offers historical remarks regarding the question addressed in this paper and the central role that Dick Askey played in the matter.
\subsection{Sieved polynomials}
The sieved orthogonal were first introduced in a paper by Al-Salam, Allaway and Askey \cite{al1984sieved} entitled \textit{Sieved ultraspherical polynomials}. They considered the $q$-ultraspherical polynomials and observed that by letting $q$ be a $k$-root of unity ($q\rightarrow exp{\frac{2i\pi}{k}}, k>2$), new families of orthogonal polynomials result whose recurrence relations are organized in $k$ blocks. They have called this process sieving as the ensuing relations are those of a certain (simple) family of polynomials except for a certain value of the degree, mod $k$, when the recurrence relations become different as if they had been sifted. According to how the limit that sets $q$ equal to a root of unity is taken, this sieving can lead to different types of polynomials and families of first and second kind have been distinguished. 

Subsequently in the survey paper \cite{askey1984orthogonal} published in a Conference Proceedings, Askey applied the sieving procedure to the Askey-Wilson polynomials to obtain two families that he called the sieved Jacobi polynomials of first and second kind. (See also \cite{badkov1987systems}.) The detailed derivation of the recurrence coefficients of these sieved polynomials can be found in \cite{aldana1998block} as well as in DeSesa's thesis \cite{desesa1992sieved}. Additional notes of interest on the discovery of the sieved polynomials are also given in these two references. 

As sieved OPUC will be called upon in our endeavour to find the eigenvalue equation of the sieved Jacobi polynomials, it is relevant to point out that Ismail and Li introduced in \cite{ismail1992sieved} the unit circle version of the sieved ultraspherical polynomials (see also \cite{marcellan1991orthogonal}) and using the Szeg\H{o} map showed their correspondence with the sieved polynomials of the first and second kind orthogonal on the interval $[-1,1]$ of the real line.

Already in his first paper with Al-Salam and Allaway \cite{al1984sieved} on the topic, Askey raised the question of the eigenvalue equation of the sieved polynomials writing in the second paragraph of the conclusion of this article: `` A potentially very important result would be the second order differential these polynomials satisfy. See Atkinson and Everitt \cite{atkinson1981orthogonal} for a proof of the existence of these equations." Together with his co-authors Askey was imagining that it could be possible to derive these equations from the $q$-divided difference of the Askey-Wilson polynomials. It turns out however that this is not possible. A differential equation could nevertheless be found and was reported in \cite{bustoz1990sieved}; it is important to stress that the equation given in this paper is not an eigenvalue one. The problem of finding the operator of which the sieved polynomials are eigenfunctions and hence of demonstrating their bispectrality hence remained.

\subsection{Bannai-Ito polynomials}

The Bannai-Ito (BI) polynomials bear the names of the two distinguished colleagues who introduced them in their classification of the $P$- and $Q$-polynomial association schemes \cite{bannai2021algebraic}. This classification makes use of an important theorem of Douglas Leonard \cite{leonard1982orthogonal}. Bannai and Ito showed that these BI polynomials are obtained as a $q\rightarrow -1$ limit of the $q$-Racah polynomials. (We see here a special root of unity lurking.) They determined their recurrence relations and gave explicit expression of these polynomials as linear combinations of two hyperfunctions $_{4}F_3(1)$. However, thirty years after their introduction, their full characterization and the clear determination of their bispectral properties were still lacking. In his development of the theory of Leonard pairs as a linear algebra tool to describe $P$- and $Q$- association schemes, Paul Terwilliger systematically included the case corresponding to these polynomials in his studies (see, for instance, \cite{terwilliger2004two}, \cite{terwilliger2005two}) and, moreover, advocated that the special function experts pay more attention to these Bannai-Ito polynomials. 

We came to the examination of the Bannai-Ito polynomials as follows. Around 2010, we started the exploration of what we have called $-1$ orthogonal polynomials by looking at the $q \rightarrow -1$ limit first, of the little $q$-Jacobi polynomials and then of the big $q$-Jacobi polynomials \cite{koekoek2010hypergeometric}. The resulting polynomials were designated by replacing $q$ by $-1$ in the names of their parents. We found that these little \cite{vinet2011missing} and big $-1$ polynomials \cite{vinet2012limit}, with the former a special case of the latter, are eigenfunctions of a first order differential operators of Dunkl type that involve reflection operators. Furthermore, we showed in \cite{vinet2011bochner} that the big $-1$ Jacobi polynomials and their specializations exhaust (under reasonable conditions) the list of orthogonal polynomials that are eigenfunctions of first-order Dunkl operators $L$ of the form:
\begin{equation}
L=F_0(x)+F_1(x)R_{-}+G_0(x)\partial_x+G_1(x)\partial_xR_{-},
\end{equation}
where $R_{-}$ is the reflection operator that acts according to $R_{-}f(x)=f(-x)$ on functions $f$ of $x$. The functions $F_0, F_1, G_0, G_1$ that are initially taken to be arbitrary reduce in the solution of the problem to two real-valued functions depending on three parameters (see \cite{vinet2011bochner} for the details).

Now, knowing that the big $-1$ Jacobi polynomials can be obtained, as per Figure 1 below, from the Bannai-Ito polynomials by letting the truncation parameter $N$ go to $\infty$ and carefully adjusting the behavior of certain parameters under this limit (see \cite{vinet2012limit} or \cite{tsujimoto2012dunkl}), it was natural to think that the Bannai-Ito could be eigenfunctions of a shift operator of Dunkl type. 
\begin{figure}
\scalebox{0.85}{
\begin{tikzpicture}[
  node distance=1.7cm and 2.4cm,
  box/.style={draw, minimum width=2cm, minimum height=0.6cm, align=center},
  arrow/.style={-{Latex}, thick}
]

\node[box] (A) {q-Racah};
\node[box, right=of A] (B) {Bannai-Ito};
\node[box, below=of A] (C) {big \( q \)-Jacobi};
\node[box, below=of B] (D) {big \(-1\) Jacobi};

\draw[arrow] (A) -- (B) node[midway, above] {\( q \to -1 \)};
\draw[arrow] (C) -- (D) node[midway, above] {\( q \to -1 \)};

\draw[arrow] (A) -- (C) node[midway, left] {\( N \to \infty \)};
\draw[arrow] (B) -- (D) node[midway, right] {\( N \to \infty \)};

\end{tikzpicture}
}
\caption{ The big $q$-Jacobi polynomials which can be obtained from the $q$-Racah polynomials in the limit $N \rightarrow \infty$ \cite{koekoek2010hypergeometric}, yield the big $-1$ Jacobi polynomials when $q \rightarrow -1$. These big $-1$ Jacobi polynomials can also be obtained by taking the $q \rightarrow -1$ limit of the $q$-Racah polynomials to find the terminating Bannai-Ito polynomials and then letting $N \rightarrow \infty$. This implies that the diagram above is commutative.}
\end{figure}
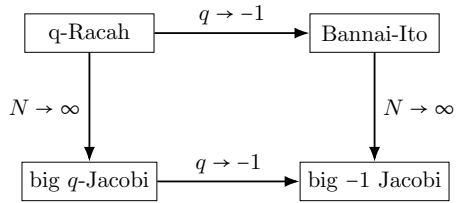


Let $T^+$ be the shift operator that has the following action on functions $f$ of $x$: $T^+f(x)=f(x+1)$. We have shown in \cite{tsujimoto2012dunkl} that the most general operator that is of first order in $T^+$, contains $R$ and, preserves the space of polynomials in $x$ of a fixed degree $n$ is of the form:
\begin{equation}
    L=F(x)(I-R)+G(x)(T^+R-I),
\end{equation}
where $I$ is the identity and $F(x)$ and $G(x)$ are given by
\begin{equation}
    F(x)=\frac{(x-\rho_1)(x-\rho_2)}{2x}, \qquad G(x)=\frac{(x-r_1+\frac{1}{2})(x-r_2+\frac{1}{2})}{2x+1},
\end{equation}
with $\rho _1, \rho _2, r_1, r_2$ four arbitrary real parameters. The eigenvalues $\sigma _n$ of these operators are:
\begin{equation} \label{eigenvaluebi}
    \sigma _n = \left\{
    \begin{aligned}
    &\;\frac{n}{2} \quad \text{if $n$ is even}\\
    &\;\rho _1 + \rho _2 + r_1+r_2-\frac{(n+1)}{2} \quad \text{if $n$ is odd}.
    \end{aligned}
    \right.
\end{equation}
Confirming our intuition, we then found in \cite{tsujimoto2012dunkl} that the four-parameter polynomial eigenfunctions of this operator $L$ with eigenvalues $\sigma _n$ given by \eqref{eigenvaluebi} are the general untruncated Bannai-Ito polynomials. Introducing the Bannai-Ito algebra realized with $L$ and multiplication by $x$ as generators, it was seen \cite{tsujimoto2012dunkl} that the whole characterization of the Bannai-Ito polynomials unfolds from this identification of their eigenvalue equation. 

It is worth stressing that the untruncated Bannai-Ito polynomials are not positive-definite. A finite set of $N+1$ elements that are positive-definite can however be obtained through a standard truncation procedure that yields a family of polynomials that are orthogonal on the grid made of the zeros $x_s,\; s=0, \dots N$ of the monic Bannai-Ito polynomial $P_{N+1}(x)$ of degree $N+1$. This Bannai-Ito grid satisfies the linear relation $x_{s+1}+x_{s-1}+2x_s+1=0$. What is remarkable is that the Dunkl shift operator $L$ becomes a three-diagonal matrix when acting on the finite ordered set $f(x_s), \;s=0,\dots, N$. This explains why the Leonard pair associated to the truncated Bannai-Ito polynomials could be derived from the equation $LP_n(x)=\lambda _n P_n(x)$ obeyed by the (untruncated) Bannai-Ito polynomials.

These striking results regarding the difference relation (or differential) eigenvalue equation for polynomials that are obtained as limits of a the real root $q=-1$ of the Askey-Wilson polynomials (or of the Big $q$- Jacobi) polynomials explains why Askey kept insisting that we investigate the question for other, more generic, roots of unity so as to obtain the operators that are diagonal on the sieved polynomials he had introduced.

We already announced that OPUC will play a central role in the resolution of Askey's challenge. Let us mention here that in works with Derevyagin, we have explained how the little $-1$ and big $-1$ Jacobi polynomials can be obtained from the Jacobi polynomials on the unit circle \cite{derevyagin2012cmv} and as well how the Bannai-Ito polynomials are related to OPUC \cite{derevyagin2014bannai}.

\section{OPUC and Laurent polynomials} \label{sect:2}
\setcounter{equation}{0}
To begin, we recall basic facts about orthogonal polynomials on the unit circle \cite{simon2005orthogonal}, \cite{simon2005opuc} and their maps to orthogonal polynomials on the real line.
\subsection{OPUC}
The OPUC $\Phi_n(z)=z^n + O(z^{n-1})$ are monic polynomials defined by the recurrence relation 
\be
\Phi_{n+1}(z) = z \Phi_n(z) -  a_n \Phi_n^*(z), \quad \Phi_0(z) =1 \lab{Sz_rec} \ee 
where
\be
\Phi_n^*(z) = z^n \Phi_n(z^{-1}).
\lab{Phi*} \ee
The Verblunsky parameters $a_n$ satisfy the condition
\be
|a_n| <1 . \lab{a<1} \ee
In what follows we shall only consider polynomials $\Phi_n(z)$  with real coefficients $a_n$. These $\Phi_n(z)$ are then orthogonal on the unit circle
\be
\int_{-\pi}^{\pi} \Phi_n(\exp(i \theta))  \Phi_m(\exp(-i \theta)) \rho(\theta) d \theta = h_n \delta_{nm}
\lab{ort_real} \ee
with $\rho$ a positive and symmetric weight function:
\be
\rho(-\theta) = \rho(\theta).
\lab{rho_sym} \ee
The normalization constants
\be
h_0 =1, \quad h_n = (1-a_0^2)(1-a_1^2) \dots (1-a_{n-1}^2) , \quad  n=1,2,\dots
\lab{h_n} \ee
are all positive  $h_n>0, \: n=0,1,\dots$ because of condition \re{a<1}.

\subsection{Laurent polynomials}
\subsubsection{The CMV Laurent polynomials}

The Laurent polynomials
\be
\psi_{2n}(z) = z^n \Phi_{2n}(1/z), \quad \psi_{2n+1}(z) = z^{-n} \Phi_{2n+1}(z)
\lab{psi_def} \ee
that satisfy similar orthogonality relation, namely
\be
\int_{-\pi}^{\pi} \psi_n(\exp(i \theta))  \psi_m(\exp(-i \theta)) \rho(\theta) d \theta = h_n \delta_{nm}.
\lab{ort_psi} \ee
will prove central in our approach to identify the sought after eigenvalue problems in the CMV-formalism .

\subsubsection{The Szeg\H{o} Laurent polynomials}

Consider in addition the set of Laurent polynomials
\be
P_0 =1, \quad P_n = z^{1-n} \Phi_{2n-1}(z) + z^{n-1} \Phi_{2n-1}(1/z) = \psi_{2n-1}(z) + \psi_{2n-1}\left(1/z \right), \; n=1,2,\dots 
\lab{P_Phi} \ee 
These polynomials are symmetric with respect to the operation $z \to 1/z$. Hence, one can conclude that $P_n$ are monic polynomials of the argument $x(z) =z+1/z$:
\be
P_n = P_n(x(z)) = P_n(z+1/z) = (z+1/z)^n + O\left( \left(z+1/z \right)^{n-1} \right).
\lab{P_x} \ee
They indeed satisfy the recurrence relation
\be
P_{n+1}(x) + b_n P_{n}(x) + u_n P_{n-1}(x) = xP_n(x) 
\lab{rec_P_Sz} \ee
where the recurrence coefficients are
\ba
&&u_n = (1+a_{2n-1}) (1-a_{2n-3}) (1-a_{2n-2}^2), \nonumber \\
&&b_n = a_{2n}(1-a_{2n-1})-a_{2n-2}(1+a_{2n-1}).
\lab{xi_Sz} \ea
The condition $a_{-1}=-1$ is assumed in these formulas. 

The ``companion" polynomials $Q_n$ are defined through the formula
\be
Q_n(z) = \frac{z^{-n} \Phi_{2n+1}(z) - z^{n} \Phi_{2n+1}(1/z)}{z-z^{-1}} = \frac{\psi_{2n+1}(z) - \psi_{2n+1}(1/z)}{z-z^{-1}}.
\lab{Q_Sz} \ee
These polynomials $Q_n$ are again monic and of degree $n$ in the same  argument $x(z)$ and they satisfy the recurrence relation 
\be
Q_{n+1}(x) + \t b_n Q_{n}(x) + \t u_n Q_{n-1}(x) = xQ_n(x) 
\lab{rec_Q_Sz} \ee
with
\ba
&&\t u_n = (1+a_{2n-1})(1-a_{2n+1})(1-a_{2n}^2), \nonumber \\ &&\t b_n = a_{2n}(1-a_{2n+1}) -a_{2n+2}(1+a_{2n+1}).
\lab{xi_Sz_Q} \ea
The polynomials $P_n$ and $Q_n$ were introduced by Szeg\H{o} \cite{szeg1939orthogonal}. They are orthogonal on the interval $[-2,2]$
\be
\int_{-2}^{2} P_n(x) P_m(x) w(x) d x = 0, \; \mbox{if} \; m\ne n,
\lab{ort_P} \ee   
and 
\be
\int_{-2}^{2} Q_n(x) Q_m(x) w(x)(4-x^2) d x = 0, \; \mbox{if} \; m\ne n,
\lab{ort_Q} \ee 
where
\be
w(x) = \frac{\rho(\theta)}{\sqrt{4-x^2}}
\lab{w_rho} \ee
and $\rho(\theta)$ is understood to be a function of $x$. Indeed, in light of \re{rho_sym}, $\rho(\theta)$ is symmetric and hence depends only on $x = 2 \cos \theta$.

The polynomials $Q_n(x(z))$ can be obtained from $P_n(x(z))$ by the double Christoffel transform at the two spectral points $x= \pm 2$. This leads to the following formula expressing $Q_n(x(z))$ in terms of $P_n(x(z))$:
\ba
&&\left(z-z^{-1}\right)^2 Q_{n-1}(x(z)) = P_{n+1}(x(z)) + (a_{2n}+a_{2n-2})(1-a_{2n-1})P_n(x(z)) \nonumber \\ &&-(1-a_{2n-1})(1-a_{2n-3})\left(1-a_{2n-2}^2 \right)P_{n-1}(x(z)). \lab{Q->P} \ea
Using the recurrence relation \re{rec_P_Sz}, one can present \re{Q->P} in the equivalent form
\ba
&&\left(z-z^{-1}\right)^2 Q_{n-1}(x(z)) = \nonumber \\
&&\left( z+z^{-1} + 2a_{2n-2} \right) P_n(x(z)) - 2\left(1-a_{2n-3} \right) \left( 1-a^2_{2n-2}\right)P_{n-1}(x(z)). \lab{Q-->P} \ea
The mapping from $Q_n(x)$ to $P_n(x)$ is given by the Geronimus transformation
\ba
&&P_n(x) = Q_n(x) - (1+a_{2n-1})(a_{2n}+a_{2n-2})Q_{n-1}(x) -  \nonumber \\ 
&&(1+a_{2n-1})(1+a_{2n-3})\left( 1-a_{2n-2}^2 \right)Q_{n-2}(x).
\lab{PQG} \ea
The inverse map from the polynomials $P_n(x), Q_n(x)$ to $\psi_n(z)$ reads
\ba
&&\psi_{2n-1}(z) = \frac{1}{2} \left[ P_n(x(z)) +\left( z-z^{-1} \right) Q_{n-1}(x(z)) \right] ,  \nonumber \\
&&\psi_{2n}(z) = \frac{1}{2} \left[ (1-a_{2n-1}) P_n(x(z)) -(1+a_{2n-1})\left( z-z^{-1} \right) Q_{n-1}(x(z))\right].  
\lab{psi_PQ}
\ea
From these formulas we have another useful expression of the polynomials $P_n(x)$ and $Q_n(x)$ in terms of the CMV ones:
\ba
&&P_n(x(z)) = \psi_{2n}(z) +\left( 1+a_{2n-1}\right) \psi_{2n-1}(z), \nonumber \\
&&\left(z-z^{-1}\right)Q_{n-1}(x(z)) = -\psi_{2n}(z) +\left( 1- a_{2n-1}\right) \psi_{2n-1}(z).
\lab{PQ_psi} \ea

\section{Sieved Jacobi OPUC and OPRL} \label{sect:3}
\setcounter{equation}{0}
This section focuses on the sieved Jacobi OPUC and OPRL that we present next and whose eigenvalue equations we shall be looking for in the next two sections.
\subsection{The sieved Jacobi OPUC}
Consider the coefficients $a_n, n=0,1, \dots,$ defined as follows 
\be
a_n = - \frac{\alpha + 1/2+ (-1)^{n+1} \left(  \beta +1/2 \right) }{n+\alpha+\beta+2}, \quad n=0,1,2,\dots
\lab{an_sol} \ee
with $\alpha, \beta$ two real parameters.
Let $N$ be a fixed natural number $N = 1,2,\dots$  
The sieved Jacobi OPUC $\Phi_n(z;N)$ are determined by the
Verblunsky parameters $a_n(N)$ that are given in terms of $a_n$ through the relations \cite{ismail1992sieved}:
\be \label{sieving}
a_n(N) = \left\{ a_{k-1}  \quad \mbox{if} \; n=Nk-1  \atop  0 \quad \quad \mbox{otherwise} .\right.
\ee
When $N=1$, we have $a_n(1)=a_n$ and the polynomials $\Phi_n(z;1)$ correspond to the (unsieved) Jacobi OPUC. Equation \eqref{sieving} thus accomplish the sieving of these polynomials. 

The sieved Jacobi OPUC are orthogonal on the unit circle
\be
\int_{0}^{2\pi} \rho(\theta;N) \Phi_n\left(e^{i \theta};N\right) \Phi_m \left(e^{-i \theta};N\right) d \theta = 0, \; n \ne m,
\lab{ort_sJOPUC} \ee
with respect to the positive weight function
\be
\rho(\theta;N) = \left( 1-\cos N\theta \right)^{\alpha+1/2}  \left( 1+ N\cos \theta \right)^{\beta+1/2}.
\lab{rho_N} \ee

It has been shown in \cite{ismail1992sieved} that
\be
\Phi_n(z;N) = z^j \Phi_k(z^N;1), \quad \mbox{where} \quad n=Nk+j, \quad j=0,1,\dots, N-1.
\lab{Phi(N)} \ee 
We shall use the corresponding Laurent CMV-polynomials that are defined according to \eqref{psi_def} 
\be
\psi_{2n}(z;N) = z^n \Phi_{2n}\left(z^{-1};N \right), \;  \psi_{2n+1}(z;N) = z^{-n} \Phi_{2n+1}\left(z;N \right).
\lab{sieved_psi} \ee
One could expect relations between $\psi_n(z;N)$ and $\psi_n(z;1)$ that are similar to formula  \re{Phi(N)}. They indeed exist, but depend on the parity of the numbers $N,n,j$.\\
If $n$ is {\it even}, one has 
\be
\psi_n(z;N) = \left\{  z^{-j/2} \psi_k \left(z^N;1 \right),  \;  j \; \mbox{even}   \atop   z^{(j+1)/2} \psi_k \left(z^{-N};1 \right), \;  j \; \mbox{odd}.\right.
\lab{psi-psi_n_even} \ee
If $n$ is {\it odd} and $N$ is {\it even}, one finds
\be
\psi_n(z;N) = \left\{  z^{(N-j)/2} \psi_k \left(z^{-N};1 \right),  \;  j \; \mbox{even}   \atop   z^{(-N+j+1)/2} \psi_k \left(z^{N};1 \right), \;  j \; \mbox{odd}.\right.
\lab{psi-psi_n_odd_N_even} \ee
Finally, if $n$ is {\it odd} and $N$ is {\it odd}, one gets
\be
\psi_n(z;N) = \left\{  z^{(-N+j+1)/2} \psi_k \left(z^{N};1 \right),  \;  j \; \mbox{even}   \atop   z^{(N-j)/2} \psi_k \left(z^{-N};1 \right), \;  j \; \mbox{odd}.\right. ,
\lab{psi-psi_n_odd_N_odd} 
\ee
where
\begin{equation}
    n=Nk+j, \quad j=0,1,\dots, N-1.
\end{equation}
\subsection{The sieved Jacobi OPRL}
The ``sieved" polynomials $P_n(z;N)\:$ and $ \: Q_n(z;N)$ that are orthogonal  on the real line are defined as follows in terms of the Laurent CMV-polynomials $\psi_{2n}(z;N)$ and $\psi_{2n+1}(z;N)$ according to the general Szeg\H{o} framework given in Section 3:
\ba
&&P_n(x\left(z);N \right) = \psi_{2n}(z;N) + \left(1+a_{2n-1}(N)\right) \psi_{2n-1}(z;N) \lab{P_sieved}\\
&& \left(z-z^{-1} \right)Q_{n-1}(x\left(z);N \right) = -\psi_{2n}(z;N) + \left(1-a_{2n-1}(N)\right) \psi_{2n-1}(z;N).
\lab{Q_sieved} \ea 
The polynomials $P_n (x(z);N)$ and $Q_n (x(z);N)$ are respectively called the sieved
Jacobi polynomials of the first and second kind. Like the Jacobi PRL, in view of \eqref{an_sol}, \eqref{sieving}, \eqref{sieved_psi}, \eqref{P_sieved} and \eqref{Q_sieved}, the polynomials  $P_n (x(z);N)$ and $Q_n (x(z);N)$ depend on the two parameters $\alpha$ and $\beta$. An important special case arises when these parameters are taken to be equal, that is when $\alpha = \beta$; this leads to the sieved ultraspherical polynomials on the real line that were first introduced in \cite{al1984sieved}. They will be discussed in Section 7. 

\section{The eigenvalue equation of the CMV Laurent polynomials associated to the sieved Jacobi OPUC}
As explained in the first two sections of this paper, in spite of Askey's conviction that it exists and of other arguments, the differential/difference eigenvalue equation of the sieved Jacobi polynomials has so far remained elusive to the best of our knowledge. We have managed to ``crack" this problem by splitting it in two parts focusing first on the unit circle framework and the eigenvalue equation of the associated CMV Laurent polynomials as a step towards obtaining the eigenvalue equation of the sieved OPRL. This section will bear on the first part and the eigenvalue equation obeyed by the Laurent polynomials
$\psi_n(z;N)$.

We shall use the reflection operators $R_j$ and $zR_j$ with 
\be
R_j f(z)= f\left( q^j/z\right), \; j=0,1,2,\dots, N-1,
\lab{R_i_def} \ee
and where $q$ is a primitive $N$-th root of unity. We shall take
\be
q=\exp\left(\frac{2 \pi i}{N} \right).
\lab{q_def} \ee

We first need the following lemma that provides the differential/difference eigenvalue equation of the special Laurent polynomial $\psi_n(z;1)$ with $N=1$, that corresponds in fact to the Jacobi OPUC \cite{vinet2024cmv}.

\begin{lemma} \label{Lemma 1}
Let  $\mathcal{K}$ be the following operator
   \be
\mathcal{K}= z \partial_z  + \frac{z\left((\alpha+\beta+1)z +\alpha-\beta \right)}{1-z^2} \left(R_--\mathcal{I} \right).
\lab{KJop} \ee
The CMV Laurent polynomials $\psi_n(z;1)$  satisfy the eigenvalue equation
\be
\mathcal{K} \psi_n(z;1) = \mu_n \psi_n(z;1),
\lab{EIG_JOPUC} \ee
where 
\be
\mu_n = \left\{ -n/2,  \; n \; \mbox{even} \atop  (n+1)/2+\alpha+\beta+1 , \; n \; \mbox{odd} . \right.
\lab{l_JOPUC} \ee

\end{lemma}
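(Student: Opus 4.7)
The plan is to leverage the Szeg\H{o} decomposition of the CMV Laurent polynomials and reduce the eigenvalue equation to classical differentiation and structure relations for Jacobi polynomials on $[-2,2]$. To start, I would observe that the Jacobi weight $\rho(\theta;1)=(1-\cos\theta)^{\alpha+1/2}(1+\cos\theta)^{\beta+1/2}$ translates under $x=2\cos\theta$ into $w(x)=2^{-(\alpha+\beta+1)}(2-x)^{\alpha}(2+x)^{\beta}$, so by \eqref{ort_P}--\eqref{ort_Q} the Szeg\H{o} polynomials $P_n(x)$ and $Q_n(x)$ are the monic Jacobi polynomials on $[-2,2]$ with parameters $(\alpha,\beta)$ and $(\alpha+1,\beta+1)$ respectively.

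Since $x(z)=z+z^{-1}$ is invariant under $R_-$, any function of $x(z)$ is $R_-$-symmetric while $(z-z^{-1})f(x(z))$ is $R_-$-antisymmetric. The decomposition \eqref{psi_PQ} thus displays $\psi_{2n-1}(z;1)$ and $\psi_{2n}(z;1)$ as $R_-$-even/odd combinations of $P_n(x(z))$ and $(z-z^{-1})Q_{n-1}(x(z))$. Two elementary computations are then key: the chain-rule identities
\begin{align*}
z\partial_z P_n(x(z)) &= (z-z^{-1})P_n'(x),\\
z\partial_z\bigl[(z-z^{-1})Q_{n-1}(x(z))\bigr] &= xQ_{n-1}(x)+(x^2-4)Q_{n-1}'(x),
\end{align*}
together with the algebraic simplification $A(z)(z-z^{-1})=-\bigl[(\alpha+\beta+1)z+\alpha-\beta\bigr]$, where $A(z)=z\bigl[(\alpha+\beta+1)z+\alpha-\beta\bigr]/(1-z^2)$ is the coefficient of $R_--\mathcal{I}$ in $\mathcal{K}$. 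Splitting $\mathcal{K}\psi_n$ and the target $\mu_n\psi_n$ into $R_-$-symmetric and antisymmetric parts (using $z=\tfrac{x}{2}+\tfrac{z-z^{-1}}{2}$), the eigenvalue equation reduces to two identities in the variable $x$ alone:
\begin{align*}
P_n'(x) &= n\,Q_{n-1}(x),\\
(4-x^2)Q_{n-1}'(x) &= \bigl[(\alpha+\beta+2)x+2(\alpha-\beta)\bigr]Q_{n-1}(x)-(n+\alpha+\beta+1)P_n(x).
\end{align*}

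Both of these are classical Jacobi facts: the first is the monic version of $\frac{d}{dy}J_n^{(\alpha,\beta)}\propto J_{n-1}^{(\alpha+1,\beta+1)}$, and the second is the Pearson/Rodrigues-type relation coming from $\frac{d}{dy}\bigl[(1-y)^{\alpha+1}(1+y)^{\beta+1}J_{n-1}^{(\alpha+1,\beta+1)}(y)\bigr]\propto(1-y)^{\alpha}(1+y)^{\beta}J_{n}^{(\alpha,\beta)}(y)$. For $n$ odd, matching the antisymmetric parts alone pins down $\mu_{2n-1}=n+\alpha+\beta+1$; for $n$ even, one additionally inserts the explicit value $a_{2n-1}=-(\alpha+\beta+1)/(2n+\alpha+\beta+1)$ from \eqref{an_sol} into the mixed combinations $(1\pm a_{2n-1})$ appearing in \eqref{psi_PQ}, and the two identities above become compatible precisely when $\mu_{2n}=-n$. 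The main obstacle I anticipate is the bookkeeping in the even case: one must check that a nontrivial cancellation involving $(1\pm a_{2n-1})$ and both Jacobi identities conspires to leave the clean integer eigenvalue $-n$, independent of $\alpha,\beta$.
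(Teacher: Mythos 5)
Your proof is correct and follows essentially the same route as the paper's: both identify $P_n$ and $Q_n$ with the classical Jacobi polynomials via the Szeg\H{o} decomposition \eqref{psi_PQ} and reduce the eigenvalue equation to classical Jacobi identities, with the monic derivative relation $P_n'(x)=n\,Q_{n-1}(x)$ (the paper's \eqref{rel_PQ}) as the key ingredient. The only organizational difference is that you split by $R_-$-parity and invoke the first-order Pearson/lowering relation for $Q_{n-1}$, whereas the paper eliminates $Q_{n-1}$ in favour of $\partial_z P_n$ and then uses the second-order Jacobi ODE \eqref{DEP} --- equivalent inputs, since that ODE is the composition of your two first-order relations --- and your even-case bookkeeping correctly pins down $a_{2n-1}=-(\alpha+\beta+1)/(2n+\alpha+\beta+1)$ in agreement with \eqref{an_sol}.
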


\begin{proof}
    It was already shown by Szeg\H{o} \cite{szeg1939orthogonal} that the pair of PRL $P_n(x(z);1), Q_n(x(z);1)$ corresponding to the Jacobi OPUC $\Phi(z;1)$  coincides with the ordinary Jacobi polynomials $P_n^{(\alpha, \beta)}(x/2)$ and $P_n^{(\alpha+1, \beta+1)}(x/2)$ which are orthogonal on the interval $[-2,2]$ with the weight functions $(2-x)^{\alpha}(2+x)^{\beta}$ and $(2-x)^{\alpha+1}(2+x)^{\beta+1}$ respectively.

The identity
\be
z \partial_z P_n(x(z);1) = n (z-1/z) Q_{n-1}(x(z);1)
\lab{rel_PQ} \ee
follows from the well known relation between Jacobi polynomials with parameters $(\alpha,\beta)$ and $(\alpha+1,\beta+1)$ \cite{koekoek2010hypergeometric}.
Moreover, the differential equation that the Jacobi polynomials $P_n(x(z);1)$ satisfy \cite{koekoek2010hypergeometric} can be presented in the form
\be
z^2 \partial_z^2 P_n(x(z);1) + z\frac{(\alpha+\beta+2)z^2+2(\alpha-\beta)z +\alpha+\beta }{z^2-1} \partial_z P_n(x(z);1) = n(n+\alpha+\beta+1) P_n(x(z);1)
\lab{DEP} \ee

Now use \re{rel_PQ} to replace $Q_{n-1}(x(z);1)$ in \re{psi_PQ} with $\partial_z P_n(x(z);1)$ and note that $\psi_n(z;1)$ can then be presented as a linear combination of $P_n(x(z);1)$ and $\partial_z P_n(x(z);1)$. Applying the operator $\mathcal K$ to the formulas for $\psi_n(z;1)$  resulting from this substitution in \re{psi_PQ}, one obtains ${\mathcal K} \psi_n(z)$ as a linear combination of $P_n(x(z;1)), \partial_z P_n(x(z);1)$ and $\partial_z^2 P_n(x(z);1)$. Eliminating $\partial_z^2 P_n(x(z);1)$ with the help of \re{DEP}, one finally arrives at \re{EIG_JOPUC}.
\end{proof}

\begin{remark}
The operator $\mathcal K$ \re{KJop} is basically Cherednik's Dunkl differential operator that appears in the study \cite{koornwinder2011nonsymmetric} of the non-symmetric Jacobi polynomials which coincide with the CMV polynomials $\psi_n(z;1)$.
\end{remark}

We shall insert here a technical lemma that will be called upon in obtaining the eigenvalue equation of $\psi_n(z;N)$.
\begin{lemma}
Assume that $h$ is a non-negative integer smaller than $N \in \mathbb{N}$ and that $q^N=1$, the following identities hold:
 \begin{align}
   &z^2\sum_{l=0}^{N-1} \frac{q^{-\frac{lj}{2}}}{q^l-z^2} = N\frac{z^{2N-j}}{1-z^{2N}},  \label{sum1}   \\
     & z^2\sum_{l=0}^{N-1} \frac{(-1)^l q^{-\frac{lj}{2}}}{q^l-z^2} 
      =z^2\sum_{l=0}^{N-1} \frac{q^{(\frac{N}{2}-\frac{j}{2})l}}{q^l-z^2} 
    = N\frac{z^{N-j}}{1-z^{2N}}.  \label{sum2}
\end{align}
 
\end{lemma}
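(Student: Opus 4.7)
The plan is to view both identities as partial-fraction expansions of a rational function of $w=z^2$ whose poles lie at the $N$-th roots of unity. I would first establish a single master identity
\begin{equation*}
\frac{Nw^{a}}{1-w^{N}} \;=\; \sum_{l=0}^{N-1}\frac{q^{l(a+1)}}{q^{l}-w}, \qquad 0\le a\le N-1,
\end{equation*}
by residue calculus: the function on the left has simple poles at $w=q^{l}$ where the derivative of $1-w^{N}$ equals $-N q^{l(N-1)}=-Nq^{-l}$ (using $q^{lN}=1$), and this produces the residue $-q^{l(a+1)}/N$, giving the displayed expansion.

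For \re{sum1} I would match exponents by demanding $a+1\equiv -j/2\pmod{N}$. Writing $j=2m$, this means $a=N-m-1$ and $q^{l(a+1)}=q^{l(N-m)}=q^{-lm}=q^{-lj/2}$. Substituting $w=z^{2}$ and multiplying through by $z^{2}/N$ then converts the master identity into \re{sum1}, with $z^{2(N-m)}=z^{2N-j}$ appearing in the numerator on the right.

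For \re{sum2} I would use the observation that $q^{N/2}=e^{i\pi}=-1$ to replace $(-1)^{l}$ by $q^{lN/2}$; this alone proves the first equality inside \re{sum2}. When $N-j$ is even, the master identity applied with $a+1=(N-j)/2$ (so that $q^{l(a+1)}=q^{l(N-j)/2}$), followed by $w=z^{2}$ and multiplication by $z^{2}$, produces $z^{N-j}$ in the numerator, yielding the right-hand side of \re{sum2}.

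The delicate point is the parity of the exponents: the residue formula strictly requires $a$ to be an integer in $\{0,\dots,N-1\}$, which forces $j$ (resp.\ $N-j$) to be even. In the opposite parity cases the natural framework is the $2N$-th roots, since $q^{-lj/2}=p^{-lj}$ with $p=q^{1/2}$; I would split $q^{l}-z^{2}=(p^{l}-z)(p^{l}+z)$, apply the analogous master partial fraction for $1-z^{2N}$ over the $2N$-th roots, and then fold the resulting $2N$-term sum back onto $l=0,\dots,N-1$ using $p^{l+N}=-p^{l}$. This last bookkeeping step is what I expect to be the main obstacle, since one has to verify that the pairs of poles at $z=p^{l}$ and $z=-p^{l}$ of the left-hand side combine to reproduce the residues of $Nz^{2N-j}/(1-z^{2N})$ (resp.\ $Nz^{N-j}/(1-z^{2N})$) at all $2N$-th roots of unity.
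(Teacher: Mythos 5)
Your proposal is correct and takes essentially the same route as the paper: your master identity is precisely the paper's \re{sumgen}, obtained there from the contour integral $\oint_{|w|>1} \frac{w^h\,dw}{(w^N-1)(w-z)}$ and by you from the partial-fraction expansion of $Nw^{a}/(1-w^{N})$ --- the same residue computation --- after which both arguments specialize the exponent and substitute $w=z^{2}$. The parity caveat you raise at the end is a fair observation about the lemma as literally stated, but the $2N$-th-root folding you anticipate as the main obstacle is not actually needed: the identities are only invoked with integer exponents (e.g.\ $j$ even in the case detailed in the proof of Theorem \ref{thm_5.4}), which is exactly the regime your argument already covers.
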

\begin{proof}
The proof proceeds \cite{vinet2025bispectrality} by considering the contour integral 
\begin{equation}
    \oint_{|w|>1} dw \frac{w^h}{(w^N-1)(w-z)}, \qquad h\in \mathbb{N},
\end{equation}
over a circle of radius larger than $1$ in the complex plane and applying Cauchy's residue theorem. 
Observing that the integrand has poles at the roots of unity $w_l=q^l, l=0,\dots N-1$ with $q=e^{\frac{2i\pi}{N}}$ and at $w=z$, and keeping in mind that 
\begin{equation}
    \lim_{w\rightarrow q^l}\;\frac{(w-q^l)}{\Pi_{k=0}^{N-1}(w-q^k)}=\frac{N}{q^l}, 
\end{equation}
one thus finds
\begin{equation}
    \sum_{l=0}^{N-1}\frac{q^{l(h+1)}}{q^l-z}=N\frac{z^h}{1-z^N}. \label{sumgen}
    \end{equation}
    Formulas \eqref{sum1} and \eqref{sum2} are then obtained from specializing \eqref{sumgen}.
\end{proof}

We are now ready to give the central result of this section which provides the differential/difference operator of Dunkl type that is diagonal on the Laurent polynomials $\psi_n(z;N)$ \cite{vinet2025bispectrality}.
\begin{theorem}\label{thm_5.4}

The CMV-Laurent polynomials $\psi_n(z;N)$ associated to the sieved Jacobi OPUC $\Phi_n(z;N)$ satisfy the eigenvalue equation
\be
L(N) \psi_n(z;N) = \lambda_n(N) \psi_n(z;N), 
\lab{eig_SOPUC} \ee
where the operator $L(N)$ is
\be
L(N) = z\partial_z + \sum_{k=0}^{N-1} A_k(z;N) \left( R_k - \mathcal{I}\right) 
\lab{L(N)} \ee
with
\be
A_k(z;N)= \sigma_k \frac{z^2}{q^k- z^2}, \; \sigma_k = \alpha+\beta+1  +(-1)^k (\alpha-\beta)
\lab{A_ev} \ee
for even $N$ and
\be
A_k(z;N) = \frac{(\alpha+\beta+1)z^2 + \rho_k(\alpha-\beta)z}{q^k- z^2}, \; \rho_k = \left\{  q^{k/2}, \; k \: \mbox{even}  \atop q^{(k-N)/2},  \; k \: \mbox{odd} \right. 
\lab{A_odd} \ee
for $N$ odd.
\\

\noindent The eigenvalues are
\be
\lambda_n(N) = \left\{ -n/2, \; n \: \mbox{even}  \atop (n+1)/2 + (\alpha+\beta+1)N, \; n \: \mbox{odd}.   \right.
\lab{lambda(N)} \ee
\end{theorem}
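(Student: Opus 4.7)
The strategy is to reduce the statement to the $N=1$ eigenvalue equation of Lemma \ref{Lemma 1} by means of the quasi-substitution identities \eqref{psi-psi_n_even}--\eqref{psi-psi_n_odd_N_odd}, which display every $\psi_n(z;N)$ in the form $z^{s_0}\psi_m(z^{\epsilon N};1)$ for an exponent $s_0\in\tfrac{1}{2}\mathbb{Z}$ and a sign $\epsilon\in\{\pm 1\}$ determined by the parities of $n$, $N$ and $j=n\bmod N$. The pivotal observation is that $q^N=1$ forces $R_k(z^{\pm N})=z^{\mp N}$ for every $k$, so the restriction of every $R_k$ to functions of $w:=z^{\epsilon N}$ is the \emph{same} involution $w\mapsto 1/w$. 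The only part of $\psi_n(z;N)$ that sees the individual reflections is the monomial $z^{s_0}$, which under $R_k$ merely picks up a character $q^{ks_0}$.

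\textbf{Main computation.} I would apply $L(N)$ to $z^{s_0}\psi_m(z^{\epsilon N};1)$ piece by piece. The derivative term factors via $z\partial_z=\epsilon N\,w\partial_w$ on functions of $w=z^{\epsilon N}$; invoking Lemma \ref{Lemma 1} in the variable $w$ replaces $w\partial_w\psi_m(w;1)$ by $\mu_m\psi_m(w;1)-V(w)\bigl(\psi_m(1/w;1)-\psi_m(w;1)\bigr)$, where $V(w):=\frac{w((\alpha+\beta+1)w+(\alpha-\beta))}{1-w^2}$ is the Dunkl coefficient of $\mathcal{K}$. The reflection part, thanks to the pivotal observation, simplifies to
\[
\sum_{k=0}^{N-1}A_k(z;N)(R_k-\mathcal{I})\bigl[z^{s_0}\psi_m(z^{\epsilon N};1)\bigr] = z^{-s_0}\psi_m(z^{-\epsilon N};1)\sum_{k=0}^{N-1}A_k(z;N)\,q^{ks_0} \;-\; z^{s_0}\psi_m(z^{\epsilon N};1)\sum_{k=0}^{N-1}A_k(z;N).
\]
Assembling these pieces displays $L(N)\psi_n(z;N)$ as a linear combination of $\psi_n(z;N)$ itself and of $z^{s_0}\psi_m(z^{-\epsilon N};1)$, with $z$-dependent coefficients. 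The eigenvalue equation then amounts to two scalar identities: vanishing of the cross-term requires $\sum_{k}A_k(z;N)\,q^{ks_0}=\epsilon N\,z^{2s_0}\,V(z^{\epsilon N})$, and constancy of the diagonal coefficient requires $\sum_{k}A_k(z;N)-\epsilon N\,V(z^{\epsilon N})$ to be $z$-independent.

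\textbf{Verification via the summation lemma.} Both identities are established by decomposing $\sigma_k$ (resp.\ $\rho_k$) of \eqref{A_ev}--\eqref{A_odd} into its two characters of $\mathbb{Z}/N\mathbb{Z}$ and matching each resulting sum to a specialization of \eqref{sum1} or \eqref{sum2}, with the discrete parameter there tuned to $s_0$. The closed-form outputs align with the monomial $\epsilon N z^{2s_0}V(z^{\epsilon N})$ on the right-hand side, and the residual discrepancy for the diagonal sum turns out to be $0$ for $\epsilon=+1$ and $-N(\alpha+\beta+1)$ for $\epsilon=-1$ (the two values are compatible because a short check gives $V(w)+V(1/w)=-(\alpha+\beta+1)$). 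The eigenvalue is then read off as $\lambda_n(N)=s_0+\epsilon N\mu_m+c(\epsilon,N)$, and one verifies case by case that it collapses to \eqref{lambda(N)}.

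\textbf{Main obstacle.} The bulk of the work is the parity bookkeeping. One has to track the several subcases (indexed by the parities of $n$, $N$ and $j$), check that the exponent $s_0$ delivered by \eqref{psi-psi_n_even}--\eqref{psi-psi_n_odd_N_odd} produces a value of the summation parameter that either lies in the admissible range $0\le j<N$ of \eqref{sum1}--\eqref{sum2} or can be shifted there by $q^N=1$, and confirm that the differing designs of $A_k(z;N)$ for even and odd $N$ --- with the sign $(-1)^k$ implicit in \eqref{A_ev} and the root-of-unity factor $\rho_k$ in \eqref{A_odd} --- are precisely what is needed to make the sums output the correct monomials. The agreement of the eigenvalues with \eqref{lambda(N)} ultimately rests on the fact that $\epsilon=+1$ occurs exactly when $m$ carries the parity flavour of $n$, so that the term $\epsilon N\mu_m$ supplies either $-Nm/2$ or $(m+1)N/2+N(\alpha+\beta+1)$ with the correct sign to reconstruct the bi-form eigenvalue; this is the nontrivial dovetailing that makes the choice of $A_k(z;N)$ work.
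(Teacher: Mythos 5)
Your proposal is correct and follows essentially the same route as the paper: reduce to the $N=1$ Dunkl eigenvalue equation of Lemma \ref{Lemma 1} via the substitution formulas \eqref{psi-psi_n_even}--\eqref{psi-psi_n_odd_N_odd}, eliminate the derivative of $\psi_m$, and verify that the coefficients of $\psi_m(z^{\epsilon N};1)$ and $\psi_m(z^{-\epsilon N};1)$ close up via the summation identities \eqref{sum1}--\eqref{sum2}, with the same parity case analysis. Your packaging of the verification into the two scalar identities $\sum_k A_k q^{ks_0}=\epsilon N z^{2s_0}V(z^{\epsilon N})$ and the constancy of $\sum_k A_k-\epsilon N V(z^{\epsilon N})$ (with the check $V(w)+V(1/w)=-(\alpha+\beta+1)$) is a clean, uniform way of presenting what the paper carries out case by case.
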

\begin{proof}    
The proof of this theorem is based on the eigenvalue equation \re{EIG_JOPUC} that the CMV-Laurent polynomials $\psi_n(z;1)$ obey and which is given in Lemma 1. It proceeds through a direct verification of \eqref{eig_SOPUC} as defined by \eqref{L(N)}-\eqref{lambda(N)}. In broad strokes, it goes as follows. One uses the relations between the sieved CMV Laurent polynomials $\psi_n(z;N)$ and the $N=1$ ones $\psi_k(z;1)$ provided by equations \eqref{psi-psi_n_even}-\eqref{psi-psi_n_odd_N_odd} to write the former in terms of the latter in the following fashion
\be
\psi_n(z;N) = z^{\nu} \psi_k\left(z^N;1 \right) \quad \mbox{or} \quad  \psi_n(z;N) = z^{\nu} \psi_k\left(z^{-N};1 \right),
\lab{psi_nu} \ee
where $\nu$ is an integer that depends on $n$, $N$ and $j$ with $n=Nk+j$. 
With $\psi_k'(z;1)$ the derivative of $\psi_k(z;1)$, we have from $\mathcal{K} \psi_k(z;1) = \mu_k \psi_k(z;1)$, \re{EIG_JOPUC}, that
\be
z^N \psi_k'\left( z^N;1\right) =   \left( G\left( z^N \right) + \mu_k \right)  \psi_k\left( z^N;1\right)  - G\left( z^N\right) \psi_k\left( z^{-N};1\right)
\lab{der_psiN} \ee
and a similar formula with $z^N$ replaced with $z^{-N}$. Substituting $\psi_n(z;N)$ as given by its expression of the type \eqref{psi_nu} into \re{eig_SOPUC} and 
replacing the derivative $\psi_k'\left( z^N;1 \right)$ by the relation \eqref{der_psiN} or as the case maybe, replacing $\psi_k'\left( z^{-N};1 \right)$ using the the relation with $z^N \rightarrow z^{-N}$, one will see that \eqref{eig_SOPUC} is transformed into an equation of the form
\be
F^{(1)}_n(z) \psi_n\left( z^N;1\right) + F^{(2)}_n(z) \psi_n\left( z^{-N};1\right) =0
\lab{FF} \ee
with the functions $F^{(1)}_n(z)$  and $F^{(2)}_n(z)$ containing $A(z^N),\; \mu_k$ and $\lambda_n(N)$. This is what needs to be proved at that point to confirm that Theorem \ref{thm_5.4} is true. As it turns out, it is possible to show that these functions $F^{(1)}_n(z)$  and $F^{(2)}_n(z)$ are identically zero separately to complete the proof.

There are many cases to check however. First $L(N)$ takes a different form depending on the parity of $N$. Second, the relations between the sieved CMV Laurent polynomials and the non-sieved ones split in numerous cases as per \eqref{psi-psi_n_even}-\eqref{psi-psi_n_odd_N_odd}. Let us indicate how the computations are carried out when $n$, $N$, (and $j$ then) are all even. In this situation, from 
\eqref{psi-psi_n_even} we have
\begin{equation}
    \psi_n(z;N)=z^{-\frac{j}{2}}\psi_k(z^N;1), \quad j=0, 2,\dots, N-2. \label{psi_special}
\end{equation}
From the Dunkl eigenvalue equation \eqref{KJop}-\eqref{l_JOPUC} for the CMV Laurent polynomials $\psi_k(z;1)$ we have:
\begin{equation}
    z^N\psi_k'(z^N;1) =- \big{[}\frac{(\alpha+\beta+1)z^{2N}+(\alpha-\beta)z^N}{1-z^{2N}}\big{]}\big{(}\psi_k(z^{-N};1)-\psi_k(z^N;1)\big{)}  -\frac{k}{2}\psi_k(z^N;1).\label{psiprime}
\end{equation}
Inserting \eqref{psi_special} in the proposed eigenvalue equation \eqref{eig_SOPUC} for the sieved Jacobi CMV OPUC, we find:
\begin{equation}
    (\frac{n}{2} - \frac{j}{2})z^{-\frac{j}{2}} \psi(Z^N;1) + Nz^{-\frac{j}{2}} Z^N\psi_k'(Z^N;1) + \sum_{l=0}^{N-1} A_l(z;N)(R_l-I)\big{(}z^{-\frac{j}{2}}\psi_k(z^N;1)\big{)}=0 \label{eig_int}
\end{equation}
with 
\begin{equation}
    A_l(z;N) = \big{[}(\alpha+\beta+1)+(-1^l)(\alpha-\beta)\big{]}, \quad l=0,\dots, N-1,
\end{equation}
for the particular case we are considering. Now
\begin{equation}
    (R_l-I) \big{(}z^{-\frac{j}{2}}\psi_k(z^N;1)\big{)}= q^{-\frac{lj}{2}}z^{\frac{j}{2}}\psi_k(z^{-N};1) - z^{-\frac{j}{2}}\psi_k(z^N;1).
\end{equation}
Putting all the pieces together in \eqref{eig_int}, i.e. replacing $z^N\psi_k'(z^N;1)$ by the r.h.s. of \eqref{psiprime} and using the sums \eqref{sum1} and \eqref{sum2}, one readily sees that the factors of $\psi_k(z^N;1)$ and of $\psi_k(z^{-N};1)$ vanish if one recalls that that $n=Nk+j$. All the other possible situations for $n$, $N$ and $j$ can be treated analogously to confirm that Theorem \ref{thm_5.4} holds.
\end{proof}

The operator $L(N)$ can be presented in the equivalent form using again the sums \eqref{sumgen}:
\be
L(N) = z\partial_z + \sum_{k=0}^{N-1} A_k(z;N)R_k +B(z) \mathcal{I},
\lab{L(N)_B} \ee
where 
\be
B(z) = -\sum_{k=0}^{N-1}A_k(z;N) = N \frac{(\alpha+\beta+1)z^{2N} + (\alpha-\beta)z^N}{z^{2N}-1}.
\lab{B_expl} \ee
Note that in contrast to the expressions of the coefficients $A_k(z;N)$, the one of $B(z)$ does not depend on the parity of $N$.
We also have the following result regarding the self-adjointness of $L(N)$:
\begin{theorem} \lab{Prop2}

The operator $L(N)$ is self adjoint on the unit circle with respect to the scalar product
\be
(f(z),g(z)) = \int_0^{2 \pi} f\left( e^{i \theta} \right) \bar g \left(e^{-i\theta}  \right) \rho(\theta;N) d \theta,
\lab{scal_N} \ee
where 
\be
\rho(\theta;N) = \left( 1-\cos N\theta \right)^{\alpha+1/2}  \left( 1+ \cos N\theta \right)^{\beta+1/2}. 
\lab{rho_N} \ee
\end{theorem}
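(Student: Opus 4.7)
The plan is to deduce the self-adjointness immediately from Theorem~\ref{thm_5.4} together with the orthogonality of the CMV Laurent polynomials $\psi_n(z;N)$. Since the Verblunsky parameters $a_n(N)$ given by \eqref{sieving} are real, the $\psi_n(z;N)$ have real coefficients, the conjugation in \eqref{scal_N} acts trivially on them, and the sesquilinear form \eqref{scal_N} restricted to pairs $(\psi_n(\cdot;N), \psi_m(\cdot;N))$ reproduces the standard orthogonality relation \eqref{ort_psi} associated to the sieved Jacobi OPUC. In particular $\{\psi_n(z;N)\}_{n\ge 0}$ is an orthogonal basis of the space of Laurent polynomials with positive norms $h_n(N)$.

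Next I would invoke Theorem~\ref{thm_5.4}: $L(N)\psi_n(z;N)=\lambda_n(N)\psi_n(z;N)$, where the eigenvalues $\lambda_n(N)$ in \eqref{lambda(N)} are all real for real $\alpha,\beta$. Expanding two arbitrary Laurent polynomials as finite sums $f=\sum_n c_n\,\psi_n(z;N)$ and $g=\sum_m d_m\,\psi_m(z;N)$ in this basis, the two sides of the required equality can be computed directly:
\begin{align*}
\bigl(L(N)f,g\bigr) &= \sum_{n,m} c_n\,\overline{d_m}\,\lambda_n(N)\,(\psi_n,\psi_m) \;=\; \sum_n c_n\,\overline{d_n}\,\lambda_n(N)\,h_n(N), \\
\bigl(f,L(N)g\bigr) &= \sum_{n,m} c_n\,\overline{\lambda_m(N)\,d_m}\,(\psi_n,\psi_m) \;=\; \sum_n c_n\,\overline{d_n}\,\lambda_n(N)\,h_n(N),
\end{align*}
where the reality of $\lambda_n(N)$ is used in the second line. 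The two expressions coincide, so $L(N)$ is symmetric with respect to \eqref{scal_N}.

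No genuine obstacle arises once Theorem~\ref{thm_5.4} and the orthogonality of the $\psi_n(z;N)$ are in hand; this is essentially a spectral-theoretic observation. A more direct, hands-on route would be to integrate by parts in the $z\partial_z$ term of \eqref{L(N)} -- yielding a boundary contribution proportional to $\rho'(\theta;N)/\rho(\theta;N)$ -- and to compute the adjoints of the individual pieces $A_k(z;N)(R_k-\mathcal I)$ using the substitution $\theta\mapsto 2\pi k/N-\theta$, under which $\rho(\theta;N)$ is invariant because it has period $2\pi/N$ and is even in $\theta$ (so that $R_k^{*}=R_{N-k}$ with respect to \eqref{scal_N}). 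The cancellation between the integration-by-parts anomaly and these adjoint contributions would then be organized by the identities \eqref{sum1}--\eqref{sum2} together with the explicit form $\rho'(\theta;N)/\rho(\theta;N)=N\bigl[(\alpha-\beta)+(\alpha+\beta+1)\cos N\theta\bigr]/\sin N\theta$. This alternative is feasible, and would be the main obstacle should one insist on avoiding the spectral shortcut, but it is considerably more intricate than the basis-expansion argument above.
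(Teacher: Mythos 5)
Your main argument is correct, but it proves the theorem by a genuinely different route from the paper. You deduce symmetry of $L(N)$ from Theorem~\ref{thm_5.4} (real eigenvalues) together with the orthogonality of the eigenbasis $\{\psi_n(z;N)\}$ with respect to \eqref{scal_N}; since the $\psi_n(z;N)$ span the Laurent polynomials and the eigenvalues \eqref{lambda(N)} are real, the bilinear computation $(L(N)f,g)=(f,L(N)g)$ goes through, and there is no circularity because the paper proves Theorem~\ref{thm_5.4} by direct verification. The paper instead computes the formal adjoint directly: it writes $L(N)$ in the form \eqref{L(N)_B}, observes that $\rho(\theta;N)$ is invariant under each reflection $R_k$ (so the $A_k(z;N)R_k$ terms are self-adjoint), and reduces \eqref{self_L} to the single identity $-i\partial_\theta\ln\rho(\theta;N)=B(z)-B^*(z)$, verified from \eqref{B_expl}. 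What the paper's route buys is independence from both Theorem~\ref{thm_5.4} and the imported orthogonality of the sieved Jacobi OPUC, plus the explicit operator identity $L(N)^{\dagger}\rho=\rho L(N)$; what your route buys is brevity, at the cost of being contingent on those prior results and of establishing only the quadratic-form statement on the space of Laurent polynomials. One small correction to your sketched ``hands-on'' alternative: with respect to \eqref{scal_N} the substitution $\theta\mapsto 2\pi k/N-\theta$ shows that each $R_k$ is its \emph{own} adjoint, $R_k^{*}=R_k$, not $R_{N-k}$; this is in fact what makes the paper's cancellation work.
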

\noindent In other words, this amounts to the following operator relation
\be
L(N)^{\dagger} \rho(\theta;N) = \rho(\theta;N) L(N),
\lab{self_L} \ee
with $L(N)^{\dagger}$ the adjoint of $L(N)$ and where it is assumed that $z=e^{i \theta}$. 

\begin{proof}
  Since $z\partial_z=-i\partial_\theta$, given \eqref{L(N)_B}, the adjoint of $L(N)$ reads:
  \begin{equation}
    L(N)^{\dagger}=-i\partial_\theta + \sum_{k=0}^{N-1} A_k(z,N)R_k+B^*(z) \label{Ldagger}
  \end{equation}
  taking into account that $R_k A_k(z)^*=A_k(z)R_k$ as is directly seen from the definition \eqref{R_i_def}, \eqref{q_def} of $R_k$ and the expressions \eqref{A_ev} and \eqref{A_odd} of $A_k(z)$. Now note that $\rho(\theta;N)=\rho(N\theta;1)$ and remark that the action of $R_k$ translates as follows on functions of $\theta$:
  \begin{equation}
      R_kf(\theta)=f(-\theta+\frac{2\pi k}{N}).
  \end{equation}
  To prove \eqref{self_L}, we first observe that $R_k\rho(\theta;N)=\rho(\theta;N)R_k$, namely that $\rho(\theta;N)$ is invariant under the inversions $R_k$. Indeed, using the periodicity of $\rho(\theta;1)$ and its reflection invariance \eqref{rho_sym}, we have:
  \begin{equation}
     R_k\rho(\theta;N)=\rho(-N\theta+2\pi k;1)R_k=\rho(-N\theta;1)R_k=\rho(N\theta;1)R_k= \rho(\theta;N)R_k.
  \end{equation}
Therefore showing that $L(N)^{\dagger} \rho(\theta;N) = \rho(\theta;N) L(N)$ reduces to checking that 
\begin{equation}
    -i\partial_\theta \;\ln \left(\rho(\theta;N)\right)=B(z)-B^*(z).
\end{equation}
One has,
\begin{equation}
    -i\partial_\theta \;\ln \left(\rho(\theta;N)\right) = -\frac{iN}{\sin N\theta}[(\alpha+\beta+1) \cos N\theta + (\alpha-\beta)]. \label{partial_ln}
\end{equation}
Writing \eqref{B_expl} in the form:
\begin{equation}
    B(z)=\frac{N}{z^N-\bar{z}^N}[(\alpha+\beta+1)z^N + (\alpha-\beta)], \quad \text{with} \quad \bar{z}=z^*=z^{-1},
\end{equation}
given that $\alpha$ and $\beta$ are real parameters, we readily find that
\begin{equation}
    B(z)-B^*(z)=\frac{N}{z^N-\bar{z}^N}[(\alpha+\beta+1)(z^N+\bar{z}^N)+ 2(\alpha-\beta)]
\end{equation}
which coincides when $z=e^{i\theta}$ with the expression \eqref{partial_ln} found for $-i\partial_\theta \;\ln \left(\rho(\theta;N) \right)$.
\end{proof}

For the ultraspherical case, i.e. when $\beta=\alpha$ the expressions for $A_k(z;N)$ and $B(z)$ become much simpler:
\be
A_k(z;N) = \frac{(2 \alpha+1)z^2}{q^k-z^2}
\lab{A_ultra} \ee
and 
\be
B(z) = N \frac{(2\alpha+1)z^{2N} }{z^{2N}-1}.
\lab{B_ultra} \ee


\section{Eigenvalue equations for the sieved Jacobi polynomials on the real line} \label{sect:6}
\setcounter{equation}{0}
In this section we present the eigenvalue equations for the sieved Jacobi polynomials of the first and second kind on the real line, thus finally fulfilling Askey's wish. As announced, these equations will be obtained with the help of the eigenvalue equation of the CMV Laurent polynomials associated to the sieved Jacobi OPUC given in Theorem 5.3.  

Introduce the operator
\be
H(N) = L^2(N) -N(\alpha+\beta+1)L(N).
\lab{H(N)} \ee
We have
\be
H(N) \psi_n(z;N) = \t \lambda_n(N) \psi_n(z;N)
\lab{H_psi} \ee
where
\be
\t \lambda_n(N) = \lambda^2_n(N) -N(\alpha+\beta+1)\lambda_n(N).
\lab{tlam} \ee
These eigenvalues satisfy the property
\be
\t \lambda_{2n} = \t \lambda_{2n-1} = \Lambda_n(N) 
\lab{tl_eo} \ee
where
\be
\Lambda_n(N) = n(n+N(\alpha+\beta+1)). 
\lab{Lambda_n} \ee
Apply the operator $H(N)$ to the polynomial $P_n(x(z);N)$. Using formulas \re{PQ_psi} and \re{tl_eo} we have:
\begin{align}
  H(N) P(x(z);N) &= H(N) \psi_{2n}(z;N) + \left(1+a_{2n-1}(N)\right) H(N) \psi_{2n-1}(z;N) \nonumber\\  
  &= \Lambda_n(N) P_n(x(z);N).
\lab{HPL}
\end{align}
Hence the polynomials $P_n(x(z);N)$ are eigenfunctions of the operator $H(N)$.

\noindent Similarly we have 
\be
\t H(N) Q_n(x(z);N)= \Lambda_{n+1}(N) Q_n(x(z);N),
\lab{HQL} 
\ee  
where the operator $\t H(N)$ is related to the operator $H(N)$ by the similarity transformation
\be
\t H(N) = \phi^{-1}(z) H(N) \phi(z)
\lab{tH} \ee
with 
\be
\phi(z) = z-z^{-1}.
\lab{phi(z)} \ee
We thus arrive at
\begin{theorem}
The sieved Jacobi polynomials $P_n(x(z);N)$ of the first kind satisfy the eigenvalue equation \re{HPL} while the sieved Jacobi polynomials of the second kind $Q_n(x(z);N)$ verify the eigenvalue equation \re{HQL}, where the operators $H(N)$ and $\t H(N)$ are defined by \re{H(N)} and \re{tH}.  
\end{theorem}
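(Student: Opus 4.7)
The plan is to derive both eigenvalue equations from the CMV eigenvalue equation \eqref{eig_SOPUC} of Theorem \ref{thm_5.4} by exploiting the fact that the operator $H(N)=L^2(N)-N(\alpha+\beta+1)L(N)$ is engineered to produce a two-fold degeneracy on the sequence $\{\psi_n(z;N)\}$. Since $P_n(x(z);N)$ and $\phi(z)Q_{n-1}(x(z);N)$ are expressed through \eqref{P_sieved}--\eqref{Q_sieved} as linear combinations of the consecutive Laurent polynomials $\psi_{2n-1}(z;N)$ and $\psi_{2n}(z;N)$, the equality of their $H(N)$-eigenvalues automatically makes each such combination an eigenvector, regardless of the specific coefficients $1\pm a_{2n-1}(N)$.

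The first step is the algebraic verification of \eqref{tl_eo}. I would substitute the two branches of $\lambda_n(N)$ from \eqref{lambda(N)} into $\tilde\lambda_n(N)=\lambda_n(N)^2-N(\alpha+\beta+1)\lambda_n(N)$ and check by a short quadratic calculation that for $n=2m$ with $\lambda_{2m}=-m$, and for $n=2m-1$ with $\lambda_{2m-1}=m+N(\alpha+\beta+1)$, both cases yield the common value $\Lambda_m(N)=m\bigl(m+N(\alpha+\beta+1)\bigr)$. The odd case collapses because $(m+N(\alpha+\beta+1))^2-N(\alpha+\beta+1)(m+N(\alpha+\beta+1))=m(m+N(\alpha+\beta+1))$. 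This identity is the computational heart of the argument and is precisely what motivates the specific choice of $H(N)$ as a quadratic polynomial in $L(N)$.

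Granted this degeneracy, \eqref{HPL} follows immediately: applying $H(N)$ to \eqref{P_sieved} and using that $\psi_{2n}(z;N)$ and $\psi_{2n-1}(z;N)$ share the eigenvalue $\Lambda_n(N)$ factors the right-hand side as $\Lambda_n(N)P_n(x(z);N)$. The same reasoning applied to the combination $\phi(z)Q_{n-1}(x(z);N)=-\psi_{2n}(z;N)+(1-a_{2n-1}(N))\psi_{2n-1}(z;N)$ from \eqref{Q_sieved} gives $H(N)\bigl[\phi(z)Q_{n-1}(x(z);N)\bigr]=\Lambda_n(N)\phi(z)Q_{n-1}(x(z);N)$; dividing by $\phi(z)$ and relabeling $n-1\mapsto n$ produces \eqref{HQL} via the definition \eqref{tH} of $\tilde H(N)$.

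The only step that merits care is the passage from the equation for $\phi(z)Q_{n-1}(x(z);N)$ to the equation for $Q_{n-1}$ itself, but this is automatic here because the image $H(N)\bigl[\phi(z)Q_{n-1}\bigr]$ is already a scalar multiple of $\phi(z)Q_{n-1}$, so multiplying by $\phi^{-1}(z)$ on the left introduces no spurious singularities. I therefore do not expect any substantive obstacle: the theorem is a direct corollary of Theorem \ref{thm_5.4} once the eigenvalue-collapse identity \eqref{tl_eo} is verified. The real work has been done in Theorem \ref{thm_5.4}; this final statement is the harvesting step in which the unit-circle Dunkl-type eigenvalue equation is transported to the real line via the Szeg\H{o} correspondence.
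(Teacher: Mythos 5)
Your proposal is correct and follows essentially the same route as the paper: verify the eigenvalue collapse $\tilde\lambda_{2n}=\tilde\lambda_{2n-1}=\Lambda_n(N)$ for the quadratic combination $H(N)=L^2(N)-N(\alpha+\beta+1)L(N)$, apply $H(N)$ to the linear combinations \eqref{P_sieved}--\eqref{Q_sieved} of consecutive $\psi$'s, and pass to $Q_{n-1}$ by conjugating with $\phi(z)=z-z^{-1}$. No substantive difference from the paper's argument.
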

In light of definition \re{H(N)}, we can provide an explicit expression for the operator $H$. Indeed, from formula \re{L(N)_B} for the operator $L(N)$ we have 
\be
H(N) = z^2 \partial_z^2 + C(z;N) \partial_z + \sum_{k=0}^{N-1} D_k(z;N)\left(  R_k - \mathcal{I} \right) + \sum_{k=1}^{N-1} E_k(z;N) T_k
\lab{H(N)_sum} 
\ee
where the $N$ rotations $T_0, T_1, \dots, T_{N-1}$ are defined by
\be
T_k f(z) = f\left( q^k z \right),\quad k=0,\dots, N-1. \label{rot}
\ee 

\noindent The coefficients $D_k(z;N)$ and $E_k(z;N)$ read
\be
D_k(z;N) = A_k(z;N) \left( B(z) + B\left( q^k/z \right)  -N(\alpha+\beta+1) \right) + z A'_k(z;N),
\lab{E_k_gen} \ee 
\be
E_k(z;N) =  \sum_{i=0}^{N-1} A_i(z;N) A_{i+k}\left( q^i/z;N\right).
\lab{G_k_sum} \ee
In the sum \re{G_k_sum}, one can take into account the cyclic property of the coefficients $A_k(z)$:
\be
A_{k+N}(z;N) = A_k(z;N)
\lab{cycl_A} \ee
which follows straightforwardly from the explicit expressions \re{A_ev} and \re{A_odd}. 
The calculations then show that all the factors of the shift operators $T_k$ vanish:
\be
E_k(z;N) = 0, \quad k=1,\dots, N-1.
\lab{G=0} \ee
Moreover, it is seen that 
\be
 B(z) + B\left( q^k/z \right)  -N(\alpha+\beta+1) =0, \quad k=0,1, \dots, N-1
 \lab{BB=0} \ee
and that hence the coefficients in front of the reflection operators $R_k$ simplify to
\be  
D_k(z) = z A'_k(z;N).
\lab{E=zA} \ee
Finally, the term in front of the first derivative is found to be
\be
C(z) = z \left(1+N \left(\alpha +\beta +1\right)+\frac{2 N \left(\alpha +\beta +1+\left(\alpha -\beta \right) z^{N}\right)}{z^{2 N}-1}\right).
\lab{C(z)} \ee
Note last that on the space of the symmetric polynomials $P_n(z), Q_n(z)$ which only depend on the argument $x=z+1/z$, the reflection operators $R_k$ can be replaced by the rotation operators $T_{-k}$. Indeed, for any symmetric Laurent polynomial $f(z)=F(z+1/z)$, we have
\be
R_k f (z) = F\left(q^k/z + z q^{-k} \right) = f\left(zq^{-k}\right) = T_{-k} f(z).
\lab{R=T} \ee

 \noindent This is summed up as follows.
\begin{theorem}\lab{Prop4}
The eigenvalue operator $H(N)$ for sieved Jacobi polynomials $P_n(z;N)$ has two equivalent expressions: 

    \begin{equation}
    H(N)=\left\{
    \begin{aligned}
       & z^2 \partial_z^2 + C(z) \partial_z + z \sum_{k=0}^{N-1} A_k'(z;N) \left(R_{k} - \mathcal{I}  \right)  \\
&z^2 \partial_z^2 + C(z) \partial_z + z \sum_{k=1}^{N-1} A_k'(z;N) \left(T_{-k} - \mathcal{I}  \right) \lab{H_expl2} 
    \end{aligned}
\right.
\end{equation}       
where the formulas for $A_k(z;N)$ and $C(z)$ are given by \re{A_ev}-\re{A_odd} and \re{C(z)} and where $R_k$ and $T_k$ are the reflection and  rotation operators
\be
R_k f(z) = f\left(q^k/z\right), \quad   T_k f(z) = f\left( z q^k\right)
\lab{T_k_f}. \ee
\end{theorem}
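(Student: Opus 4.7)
The plan is to verify the claimed expressions starting from the definition $H(N)=L(N)^{2}-N(\alpha+\beta+1)L(N)$, using the form \eqref{L(N)_B} of $L(N)$, namely $L(N)=z\partial_z+\sum_{k=0}^{N-1}A_{k}(z;N)R_{k}+B(z)\mathcal{I}$. The core task is a bookkeeping computation: expand $L(N)^{2}$, push all $\partial_z$ to the right and all reflections to the right, and collect by operator type (second derivative, first derivative, identity, reflections $R_{k}$, rotations $T_{m}$). I would record at the outset the three commutation rules that drive the calculation: the Leibniz identity $(z\partial_z)f=zf'+f(z\partial_z)$, the intertwining $(z\partial_z)R_{k}=-R_{k}(z\partial_z)$ (easy from $R_{k}f(z)=f(q^{k}/z)$), and the composition $R_{i}R_{j}=T_{j-i}$ with $T_{m}f(z)=f(q^{m}z)$. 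Only the first two are needed to produce the $z^{2}\partial_z^{2}$ and first-order derivative pieces; the third produces the apparent rotation contributions that must be shown to cancel.

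After expansion, the $\partial_z^{2}$-part is plainly $z^{2}\partial_z^{2}$ (coming from $(z\partial_z)^{2}=z^{2}\partial_z^{2}+z\partial_z$), and the coefficient of $\partial_z$ is $z(1+2B(z)-N(\alpha+\beta+1))$. I would then check that, upon substituting the closed form \eqref{B_expl} of $B(z)$, this collapses exactly to $C(z)$ as given in \eqref{C(z)}. The coefficient of $R_{k}$ is $zA_{k}'(z;N)+A_{k}(z;N)\bigl(B(z)+B(q^{k}/z)-N(\alpha+\beta+1)\bigr)$, while the coefficient of the rotation $T_{m}$ for $m=1,\dots,N-1$ is $E_{m}(z;N)$ as in \eqref{G_k_sum}, and the full identity contribution is $zB'(z)+B(z)^{2}-N(\alpha+\beta+1)B(z)+\sum_{k}A_{k}(z;N)A_{k}(q^{k}/z;N)$.

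The main obstacle is the trio of vanishing identities that must hold for the operator to assume the advertised shape: (i) $B(z)+B(q^{k}/z)=N(\alpha+\beta+1)$ for each $k=0,\dots,N-1$, which trivialises the bracket multiplying $A_{k}$ in the $R_{k}$-coefficient; (ii) $E_{m}(z;N)=0$ for $m=1,\dots,N-1$; and (iii) the residual identity contribution reduces to $-z\sum_{k}A_{k}'(z;N)$, i.e.\ $\sum_{k}A_{k}(z;N)A_{k}(q^{k}/z;N)=B(z)B(1/z)$. Identity (i) follows from $q^{Nk}=1$ by direct computation on the closed form of $B$. Identity (ii) is where the summation lemma \eqref{sumgen} (and its specialisations \eqref{sum1}, \eqref{sum2}) earns its keep: after substituting the explicit $A_{k}(z;N)$ from \eqref{A_ev}--\eqref{A_odd}, each $E_{m}$ becomes a rational combination of sums $\sum_{l}q^{al}/(q^{l}-z^{2})$ evaluated at shifted arguments, and the cyclic property \eqref{cycl_A} together with those sums makes the expression vanish. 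Identity (iii) follows from (i) by the algebraic identity $B^{2}-N(\alpha+\beta+1)B=-B(z)B(q^{k}/z)$ combined with a parallel evaluation of $\sum_{k}A_{k}(z;N)A_{k}(q^{k}/z;N)$ via \eqref{sumgen}. This establishes the first expression in \eqref{H_expl2}.

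For the second expression, I would invoke the observation made right after \eqref{R=T}: on any Laurent polynomial that is symmetric under $z\mapsto 1/z$, such as the $P_{n}(x(z);N)$ and $Q_{n}(x(z);N)$ of interest, the reflection $R_{k}$ coincides with the rotation $T_{-k}$. Since the $k=0$ contribution $zA_{0}'(z;N)(R_{0}-\mathcal{I})$ then vanishes on such symmetric functions (because $R_{0}=T_{0}=\mathcal{I}$ there), the sum in the first form reduces to the sum from $k=1$ to $N-1$ with $R_{k}$ replaced by $T_{-k}$, yielding the second form. Throughout, I expect no computation to be genuinely difficult in isolation; the real care is in organising the many terms produced by $L(N)^{2}$ and matching parities of $N$ in the two expressions \eqref{A_ev} and \eqref{A_odd} for $A_{k}(z;N)$ when applying the summation lemma.
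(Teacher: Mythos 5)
Your proposal follows essentially the same route as the paper: expand $H(N)=L(N)^2-N(\alpha+\beta+1)L(N)$ from the form \re{L(N)_B}, collect by operator type using $(z\partial_z)R_k=-R_k(z\partial_z)$ and $R_iR_j=T_{j-i}$, verify the vanishing identities $E_m=0$ and $B(z)+B(q^k/z)=N(\alpha+\beta+1)$ via the summation lemma, and pass to the second form through $R_k=T_{-k}$ on symmetric Laurent polynomials. Your explicit check (iii) that the scalar part collapses to $-z\sum_k A_k'$ (equivalently $\sum_kA_k(z)A_k(q^k/z)=B(z)B(1/z)$) is a point the paper leaves implicit in its grouping $\sum_k D_k(R_k-\mathcal I)$, and is a welcome addition.
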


\begin{remark}
The two operators of \eqref{H_expl2} providing expressions for $H(N)$ in Theorem \ref{Prop4} have the same action on the space of symmetric Laurent polynomials i.e. such that $f(z)=f(1/z)$ and are hence indistinguishable when acting on symmetric polynomials like $P_n(z)$ or $Q_n(z)$. Hovever, these operators are not equivalent on the space of arbitrary Laurent polynomials. 
\end{remark}
\begin{remark}
   It easily seen that the operator $H(N)$ is self-adjoint as it is obtained, according to formula \re{H(N)}, from the operator $L(N)$ that was shown to be self-adjoint in Proposition \ref{Prop2}. 
\end{remark}

\section{The sieved ultrasperical polynomials} \label{sect:8}
\setcounter{equation}{0}

When $\alpha=\beta$ and $N$ is arbitrary, the polynomials $P_n(x;N)$ and $Q_n(x;N)$ are respectively, the sieved ultraspherical polynomials of the first and second kind found in \cite{al1984sieved}. 

\subsection{First kind}
The sieved ultraspherical polynomials of the first kind obey the recurrence relation 
\be
P_{n+1}(x) + u_n P_{n-1}(x) = xP_n(x),
\lab{rec_gultra} \ee
with coefficients given by
\be
u_{nN} = \frac{2n}{2\alpha+2n+1}, \quad u_{nN+1} = \frac{4\alpha+2n+2}{2\alpha+2n+1}, \quad n=0,1,2\dots
\lab{u_1nd} 
\ee 
and
\be
u_n=1 \quad \text{for all other} \quad n. \nonumber
\ee
From the results of Section 6 we have:
\begin{corollary}

The sieved ultraspherical polynomials of the first kind $P_n(x;N)$ satisfy the eigenvalue equation
\be 
H(N) P_n\left(z+1/z; N \right) = n\left(n+ \left(2 \alpha+1 \right)N \right) P_n\left(z+1/z ; N \right), \; n=0,1,2,\dots,
\lab{EE_ultra} \ee
where the operator $H(N)$ has equivalently either one of the following two expressions
\begin{equation}
H(N) = \left\{
\begin{aligned}
    &z^2 \partial_z^2 + C(z) \partial_z + \sum_{k=0}^{N-1} B_k(z)\left(R_k - \mathcal{I}\right) \label{eq:L_ultra} \\
    &z^2 \partial_z^2 + C(z) \partial_z + \sum_{k=1}^{N-1} B_k(z)\left(T_{-k} - \mathcal{I}\right) 
\end{aligned}
\right.
\end{equation}

with 
\be
q=\exp\left(\frac{2\pi i}{N} \right), \quad T_k f(z) = \left(q^k z \right)
\lab{q_T} \ee
and
\be
B_k(z) = \frac{2(2 \alpha+1)q^k z^2}{\left( q^k -z^2\right)^2}, \quad C(z) = z \left[1+N(2 \alpha+1) + \frac{2N(2\alpha+1)}{z^{2N}-1} \right]. 
\lab{AC_ultra} 
\ee

\end{corollary}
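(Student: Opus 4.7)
The plan is to obtain this corollary as a direct specialization of Theorem \ref{Prop4} to the case $\alpha=\beta$. Since the ultraspherical polynomials of the first kind are by definition the sieved Jacobi polynomials of the first kind with $\alpha=\beta$, Theorem \ref{Prop4} already asserts that they satisfy an eigenvalue equation of the required shape; what remains is to verify that, upon setting $\beta=\alpha$, the eigenvalue and the coefficient functions reduce to those stated in the corollary.

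First, I would specialize the eigenvalue. The general eigenvalue from Theorem 6.1 is $\Lambda_n(N)=n\bigl(n+N(\alpha+\beta+1)\bigr)$, which becomes $n\bigl(n+(2\alpha+1)N\bigr)$ upon setting $\beta=\alpha$, matching \eqref{EE_ultra}. Next I would specialize the first-order coefficient $C(z)$ from \eqref{C(z)}: the numerator $\alpha+\beta+1+(\alpha-\beta)z^N$ inside the fraction collapses to $2\alpha+1$, so
\begin{equation*}
C(z)=z\left(1+N(2\alpha+1)+\frac{2N(2\alpha+1)}{z^{2N}-1}\right),
\end{equation*}
exactly as stated in \eqref{AC_ultra}.

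The remaining point is to identify the coefficient $B_k(z)$ of the reflection/rotation operators with the quantity $zA_k'(z;N)$ appearing in Theorem \ref{Prop4}. For $\alpha=\beta$, the formulas \eqref{A_ev} and \eqref{A_odd} both collapse to the ultraspherical form \eqref{A_ultra}, namely $A_k(z;N)=(2\alpha+1)z^2/(q^k-z^2)$, independently of the parity of $N$. A short differentiation with respect to $z$ then yields $zA_k'(z;N)=2(2\alpha+1)q^kz^2/(q^k-z^2)^2$, which is precisely $B_k(z)$ as defined in \eqref{AC_ultra}. Substituting these specialized expressions into the two equivalent forms of $H(N)$ provided by \eqref{H_expl2} produces the two forms stated in \eqref{eq:L_ultra}.

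The only conceptual check, which I do not expect to present any obstacle, is that the simplification $\beta=\alpha$ preserves the compatibility of the two equivalent expressions for $H(N)$ on symmetric Laurent polynomials; but this is inherited from the corresponding statement in the remark following Theorem \ref{Prop4}, since the identity $R_kf(z)=T_{-k}f(z)$ on symmetric $f$ is independent of the parameters. Hence the corollary follows directly from Theorem \ref{Prop4} with no new calculations beyond the elementary substitutions just described.
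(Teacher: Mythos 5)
Your proposal is correct and follows essentially the same route as the paper, which likewise presents this corollary as a direct specialization of Theorem \ref{Prop4} to $\alpha=\beta$ (the paper simply writes ``From the results of Section 6 we have''). Your explicit verification that $zA_k'(z;N)=2(2\alpha+1)q^kz^2/(q^k-z^2)^2=B_k(z)$ and that $C(z)$ and the eigenvalue $\Lambda_n(N)$ collapse as stated supplies exactly the elementary substitutions the paper leaves implicit.
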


\subsection{Second kind}

Coming to the sieved ultraspherical polynomials of the second kind, we see that those PRL have the recurrence coefficients 
\be
u_{nN} = \frac{2n}{2\alpha+2n+1}, \quad u_{nN-1} = \frac{4\alpha+2n+2}{2\alpha+2n+1}, \quad n=1,2, 3,  \dots
\lab{u_2nd} 
\ee 
and 
\be
u_n=1 \quad \text{for all other} \quad n. \nonumber
\ee
The operator $\t H(N)$ defining the specialization of the eigenvalue equation \eqref{HQL} for the polynomials $Q_n(z;N)$ can be obtained from formula \re{tH}. The spectrum $\Lambda_{n+1}(N)$ is provided by \eqref{Lambda_n} and it will be recalled that we are considering the particular case $\alpha=\beta$. For standardization purposes, the eigenvalue equation will be rewritten in the form $\hat{H}_n(N)Q_n(N)=\Xi_n(N)Q_n(N)$ with $\Xi_0(N)=0$. This will be achieved by subtracting the constant $\Lambda_1(N)$ from $\tilde{H}(N)$. Proceeding with this reformatting, it is readily seen that 
\begin{equation}
    \Xi_n(N)=\Lambda_{n+1}(N)-\Lambda_1(N)=n(n+N(2\alpha+1)+2),
\end{equation}
while in view of \eqref{tH}, $\hat{H}(N)$ is given by
\begin{equation}
    \hat{H}(N)=\left(z-z^{-1} \right)^{-1} H(N) \left(z-z^{-1} \right) - \left(N(2\alpha+1) +1 \right) \mathcal{I}.
\lab{tH_Q} 
\end{equation}
After straightforward computations to perform the conjugation and carry out simplifications, one arrives at the following result.

\begin{corollary}

The sieved ultraspherical polynomials of the second kind $Q_n(x;N)$ satisfy the eigenvalue equation
\be 
\hat{H}(N) Q_n\left(z+1/z; N \right) = n\left(n+ \left(2 \alpha+1 \right)N + 2\right) Q_n\left(z+1/z ; N \right), \; n=0,1,2,\dots,
\lab{EQ_ultra} \ee
where $\hat{H}(N)$ takes equivalently one of the following two forms
\begin{equation}
\hat{H}(N)=
    \left\{
\begin{aligned}
    &z^2 \partial_z^2 + \hat{C}(z) \partial_z + \sum_{k=0}^{N-1} \hat{B}_k(z)\left(R_{k} - \mathcal{I} \right)  
\lab{HQ_ultra}\\
&z^2 \partial_z^2 + \hat{C}(z) \partial_z + \sum_{k=1}^{N-1} \hat{B}_k(z)\left(T_{-k} - \mathcal{I} \right),  
\end{aligned}
\right.
\end{equation}
and where 
\be
\hat{B}_k(z) = \frac{q^{2k}-z^2}{q^k\left(z^2-1 \right)} B_k(z), \quad \hat{C}(z) = C(z) +\frac{2z(z^2+1)}{z^2-1}, 
\lab{BC_Q} \ee
with $B_k(z)$ and $C(z)$ given by \re{AC_ultra}. 
\end{corollary}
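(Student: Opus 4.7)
The plan is to derive Corollary 2 directly from the general result of Section 6 by specializing to $\alpha=\beta$ and carrying out explicitly the similarity transformation \re{tH} together with the eigenvalue shift recorded in \re{tH_Q}. The starting point is the eigenvalue equation \re{HQL} with $\tilde{H}(N)=\phi^{-1}(z) H(N) \phi(z)$ and $\phi(z)=z-z^{-1}$. Since for $\alpha=\beta$ the first-kind operator $H(N)$ is already given by the expression \eqref{eq:L_ultra} of Corollary 1, the task reduces to (i) computing the eigenvalue $\Lambda_{n+1}(N)-\Lambda_1(N)$, which is immediate: expanding $(n+1)(n+1+N(2\alpha+1))-(1+N(2\alpha+1))$ yields $n(n+(2\alpha+1)N+2)$, as claimed; and (ii) conjugating each term of $H(N)$ by $\phi$ and then subtracting the constant $N(2\alpha+1)+1$.

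I would handle the conjugation term by term. For the pure second-order piece, I would apply the identity $\phi^{-1}\partial_z\phi=\partial_z+\phi'/\phi$ twice to obtain
\be
\phi^{-1}\,z^2\partial_z^2\,\phi = z^2\partial_z^2+\frac{2z(z^2+1)}{z^2-1}\,\partial_z-\frac{2}{z^2-1}.
\ee
For the first-order piece, since $C(z)$ is a multiplication operator, $\phi^{-1} C(z)\partial_z\phi=C(z)\partial_z+C(z)\phi'(z)/\phi(z)$, and adding to the previous step gives exactly $\hat{C}(z)=C(z)+2z(z^2+1)/(z^2-1)$ in front of $\partial_z$. For each reflection term I would use
\be
\phi^{-1}(z)\,R_k\,\phi(z)=\frac{\phi(q^k/z)}{\phi(z)}\,R_k=\frac{q^{2k}-z^2}{q^k(z^2-1)}\,R_k,
\ee
so that $\phi^{-1} B_k(z)R_k\phi = \hat{B}_k(z)\,R_k$, which matches the claimed coefficient, while the $-B_k(z)\mathcal{I}$ term is left unchanged by the conjugation.

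The main obstacle is verifying that, after collecting all multiplicative (i.e.\ $\mathcal{I}$-type) contributions produced by the three conjugations and subtracting $(N(2\alpha+1)+1)\mathcal{I}$, the result coincides with $-\sum_{k=0}^{N-1}\hat{B}_k(z)$ so that the operator indeed assembles into the form $z^2\partial_z^2+\hat{C}(z)\partial_z+\sum_k\hat{B}_k(z)(R_k-\mathcal{I})$. Concretely, one must verify the rational-function identity
\be
-\sum_{k=0}^{N-1}B_k(z)-\frac{2}{z^2-1}+\frac{C(z)(z^2+1)}{z(z^2-1)}-\bigl(N(2\alpha+1)+1\bigr) = -\sum_{k=0}^{N-1}\hat{B}_k(z).
\ee
This is pure algebra: the sum $\sum_k B_k(z)=-zB'(z)$ is obtained by differentiating the closed form $B(z)=-N(2\alpha+1)z^{2N}/(z^{2N}-1)$ coming from \re{B_expl} at $\alpha=\beta$, and the sum $\sum_k \hat{B}_k(z)$ reduces similarly to a rational function of $z^N$ via partial fractions and the cyclotomic identity $\prod_{k=0}^{N-1}(w-q^k)=w^N-1$, which already underpinned Lemma~2 of Section 5. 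The substitution $C(z)$ from \re{C(z)} then makes both sides manifestly equal.

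Once the first line of \eqref{HQ_ultra} is established, the second form with $T_{-k}$ in place of $R_k$ follows at once from the observation recorded in the remark after Theorem \ref{Prop4}: on symmetric Laurent polynomials $f(z)=f(1/z)$, which is exactly the space containing $Q_n(z+1/z;N)$, one has $R_kf(z)=T_{-k}f(z)$, so replacing each $R_k$ by $T_{-k}$ produces an operator with the same action, and the $k=0$ term drops since $R_0-\mathcal{I}$ and $T_0-\mathcal{I}$ both annihilate symmetric functions. This completes the proof plan.
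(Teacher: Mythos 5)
Your proposal is correct and follows essentially the same route as the paper, which likewise obtains $\hat{H}(N)$ by conjugating $H(N)$ with $\phi(z)=z-z^{-1}$ and subtracting the constant $\Lambda_1(N)=N(2\alpha+1)+1$; the term-by-term conjugation formulas and the final rational-function identity you isolate are exactly the ``straightforward computations'' the paper leaves implicit, and that identity does check out. One small slip to fix when executing the algebra: from \re{B_expl} at $\alpha=\beta$ (see \re{B_ultra}) the closed form is $B(z)=+N(2\alpha+1)z^{2N}/(z^{2N}-1)$, not $-N(2\alpha+1)z^{2N}/(z^{2N}-1)$ as you wrote, so that $\sum_{k}B_k(z)=-zB'(z)=2N^2(2\alpha+1)z^{2N}/(z^{2N}-1)^2$.
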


\section{Conclusion}

In the end, Dick was therefore right: the sieved Jacobi polynomials (and their special ultraspherical case) are eigenfunctions of a differential operator of Dunkl type. To be noted is the fact that this operator is expressed in the variable $z$, an indication it was obtained from the one diagonalizing the Laurent polynomials associated to the sieved Jacobi OPUC. This therefore means that both these Laurent polynomials and the sieved Jacobi OPRL are bispectral. In the case of the OPUC, one speaks of CMV bispectrality as there are a number of recurrence relations in the form of pentadiagonal CMV relations, see \cite{vinet2025bispectrality}. A problem of great interest is to determine the algebras that encode these bispectralities for both the sieved Jacobi OPRL and OPUC. In the case of trivial sieving ($N=1)$, when one really considers the Jacobi OPUC, this algebra was identified in \cite{vinet2024cmv} and called the circle Jacobi algebra. It was seen that its representations provide the corresponding Verblunsky parameters. The extension to arbitrary $N$ is bound to be enlightening as this could lead to a representation theoretic description of the sieved polynomials. One should of course pursue the exploration of the eigenvalue equations of other families of sieved polynomials such as the Pollaczek one \cite{charris1987sieved}. All this adds another example illustrating that Askey's life and work remain a constant source of inspiration.

\section*{Acknowledgments}
We wish to reiterate our posthumous gratefulness to Dick Askey for inviting us with kindness and insistence to look for the eigenvalue equation of sieved polynomials. We also thank Mourad Ismail for inspiring discussions. LV is funded in part through a discovery grant of the Natural Sciences and Engineering Research Council (NSERC) of Canada. 

\bibliographystyle{unsrt} 
\bibliography{ref_sievAsk.bib}

\end{document}